\begin{document}
\begin{CJK}{GBK}{song}

\newtheorem{theorem}{Theorem}
\newtheorem{lemma}{Lemma}
\newtheorem{definition}{Definition}
\newtheorem{remark}{Remark}
\newtheorem{corollery}{Corollery}

\title{\bf The Cartan Model for Equivariant Cohomology}
\author{Xu Chen \footnote{{\it Email:} xiaorenwu08@163.com.  ChongQing, China }}
\date{}
\maketitle

\begin{abstract}
In this article, we will discuss a new operator $d_{C}$ on $W(\mathfrak{g})\otimes\Omega^{*}(M)$ and to construct a new Cartan model for equivariant cohomology. We use the new Cartan model to construct the corresponding BRST model and Weil model, and discuss the relations between them.
\end{abstract}

\section{Introduction}
The standard Cartan model for equivariant cohomology is construct on the algebra $W(\mathfrak{g})\otimes\Omega^{*}(M)$ with operator $$d_{C}\phi^{i}=0, \phi^{i}\in S(\mathfrak{g}^{*}), i=1,\cdots,n;$$
$$d_{C}\eta=(1\otimes d-\sum_{b=1}^{n}\phi^{b}\otimes\iota_{b})\eta, \eta\in\Omega^{*}(M),$$
where $\iota_{b}$ is $\iota_{e_{b}}$(see [4],[5],[7],[8]). We can also introduce a new operator on $W(\mathfrak{g})\otimes\Omega^{*}(M)$ by $$d_{C}\phi^{i}=0, \phi^{i}\in S(\mathfrak{g}^{*}), i=1,\cdots,n;$$
$$d_{C}\eta=(1\otimes d-\sum_{b=1}^{n}\phi^{b}\otimes(\iota_{b}+\sqrt{-1}f_{b}^{a}\iota_{a}))\eta, \eta\in\Omega^{*}(M)\otimes\mathbb{C},$$
where $\iota_{b}$ is $\iota_{e_{b}}$. In this article we construct the new model for equivariant cohomology which also called Cartan model. The idea comes form the article [3]. We also use the new Cartan model to construct the corresponding BRST model and Weil model.

\section{Cartan model}
Let $G$ ba a compact Lie group with Lie algebra $\mathfrak{g}$,
$\mathfrak{g}^{*}$ be the dual of $\mathfrak{g}$. We known the Weil
algebra is
$$W(\mathfrak{g})=\wedge(\mathfrak{g}^{*})\otimes S(\mathfrak{g}^{*}).$$
The contraction $i_{X}$ and the exterior derivative $d_{W}$ on
$W(\mathfrak{g})$ defined as follow: \par

Choose a basis $e_{1},\cdots,e_{n}$ for $\mathfrak{g}$ and let
$e_{1}^{*},\cdots,e_{n}^{*}$ be the dual basis of
$\mathfrak{g}^{*}$. Let $\theta^{1},\cdots,\theta^{n}$ be the dual
basis of $\mathfrak{g}^{*}$ generating the exterior algebra
$\wedge(\mathfrak{g}^{*})$ and let $\phi^{1},\cdots,\phi^{n}$ be the
dual basis of $\mathfrak{g}^{*}$ generating the symmetric algebra
$S(\mathfrak{g}^{*})$. Let $c^{i}_{jk}$ be the structure constants
of $\mathfrak{g}$(see [6]), that is
$[e_{j},e_{k}]=\Sigma_{i=1}^{n}c^{i}_{jk}e_{i}$.
We kown that $S(\mathfrak{g}^{*})$ is identified with the polynomial ring $\mathbb{C}[\phi^{1},\cdots,\phi^{n}]$.\par

Define the contraction $i_{X}$ on $W(\mathfrak{g})$ for any
$X\in\mathfrak{g}$ by
$$i_{e_{r}}(\theta^{s})=\delta_{r}^{s}, \ i_{e_{r}}(\phi^{s})=0$$
for all $r,s=1,\cdots,n$ and extending by linearity and as a
derivation.

Define $d_{W}$ by
$$d_{W}\theta^{i}=-\frac{1}{2}\sum_{j,k}c^{i}_{jk}\theta^{j}\wedge\theta^{k}+\phi^{i}$$
and $$d_{W}\phi^{i}=-\sum_{j,k}c^{i}_{jk}\theta^{j}\phi^{k}$$ and
extending $d_{W}$ to $W(\mathfrak{g})$ as a derivation.

The Lie derivative on $W(\mathfrak{g})$ is defined by
$$L_{X}=d_{W}\cdot i_{X}+i_{X}\cdot d_{W}.$$

\begin{lemma}
$L_{e_{i}}\theta^{j}=-\sum_{k}c^{j}_{ik}\theta^{k}$ and
$L_{e_{i}}\phi^{j}=-\sum_{k}c^{j}_{ik}\phi^{k}$.
\end{lemma}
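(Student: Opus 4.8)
The plan is to apply the definition $L_{e_i} = d_{W}\cdot i_{e_i} + i_{e_i}\cdot d_{W}$ directly to the generators $\theta^j$ and $\phi^j$, evaluating each of the two composite terms separately. The only facts needed beyond the given definitions are that $i_{e_i}$ is an odd (anti-)derivation of degree $-1$, that $d_{W}$ annihilates scalars, and the antisymmetry $c^i_{jk} = -c^i_{kj}$ of the structure constants coming from $[e_j,e_k] = -[e_k,e_j]$.

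First I would handle $\theta^j$. The term $d_{W}\,i_{e_i}\theta^j = d_{W}(\delta_i^j)$ vanishes, since $\delta_i^j$ is a scalar. For the remaining term I substitute $d_{W}\theta^j = -\tfrac{1}{2}\sum_{k,l}c^j_{kl}\theta^k\wedge\theta^l + \phi^j$ and apply $i_{e_i}$. Using $i_{e_i}\phi^j = 0$ and the anti-derivation rule $i_{e_i}(\theta^k\wedge\theta^l) = \delta_i^k\theta^l - \delta_i^l\theta^k$, the quadratic part splits into two sums; rewriting the second one by means of $c^j_{ki} = -c^j_{ik}$ makes the two sums equal, and they combine to $-\sum_k c^j_{ik}\theta^k$, which is the asserted value.

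Then I would treat $\phi^j$ in the same way. Here $i_{e_i}\phi^j = 0$ kills the first composite term, so only $i_{e_i}(d_{W}\phi^j)$ survives. Substituting $d_{W}\phi^j = -\sum_{k,l}c^j_{kl}\theta^k\phi^l$ and using that $i_{e_i}$ extracts $\delta_i^k$ from $\theta^k$ while annihilating $\phi^l$, the double sum collapses to $-\sum_k c^j_{ik}\phi^k$, as required.

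The computation is routine; the only point demanding care is the sign bookkeeping in the $\theta^j$ case, where the graded (anti-derivation) behaviour of the contraction and the antisymmetry of the structure constants must be combined correctly so that the two surviving terms reinforce rather than cancel. I expect no genuine obstacle beyond this verification of signs.
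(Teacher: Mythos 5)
Your proposal is correct and follows essentially the same route as the paper: both apply the definition $L_{e_i}=d_W\, i_{e_i}+i_{e_i}\, d_W$ directly to the generators, note that $d_W i_{e_i}\theta^j$ and $i_{e_i}\phi^j$ vanish, and contract the explicit formulas for $d_W\theta^j$ and $d_W\phi^j$. Your version is in fact slightly more careful than the paper's, which silently absorbs the factor $\tfrac{1}{2}$ via the antisymmetry $c^j_{ki}=-c^j_{ik}$ (and reuses the index $i$ as a summation index), exactly the sign bookkeeping you spell out.
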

\begin{proof}
Because
$$L_{e_{i}}\theta^{j}=(d_{W}\cdot i_{e_{i}}+i_{e_{i}}\cdot d_{W})\theta^{j}=i_{e_{i}}(-\frac{1}{2}\sum_{i,k}c^{j}_{ik}\theta^{i}\wedge\theta^{k}+\phi^{j})=-\sum_{k}c^{j}_{ik}\theta^{k},$$
$$L_{e_{i}}\phi^{j}=(d_{W}\cdot i_{e_{i}}+i_{e_{i}}\cdot d_{W})\phi^{j}=i_{e_{i}}(-\sum_{i,k}c^{j}_{ik}\theta^{i}\phi^{k})=-\sum_{k}c^{j}_{ik}\phi^{k}$$
\end{proof}

\begin{lemma}
The operators $i_{X}, d_{W}, L_{X}$ on $W(\mathfrak{g})$ satisfy the
following identities:
\begin{description}
\item[(1)]$d_{W}^{2}=0$;
\item[(2)]$L_{X}\cdot d_{W}-d_{W}\cdot L_{X}=0$, for any
$X\in\mathfrak{g}$;
\item[(3)]$i_{X}i_{Y}+i_{Y}i_{X}=0$, for any $X,Y\in\mathfrak{g}$;
\item[(4)]$L_{X}i_{Y}-i_{Y}L_{X}=i_{[X,Y]}$, for any
$X,Y\in\mathfrak{g}$;
\item[(5)]$L_{X}L_{Y}-L_{Y}L_{X}=L_{[X,Y]}$, for any
$X,Y\in\mathfrak{g}$;
\item[(6)]$d_{W} i_{X}+i_{X} d_{W}=L_{X}$, for any
$X\in\mathfrak{g}$.
\end{description}
\end{lemma}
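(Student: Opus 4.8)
The plan is to exploit the fact that every operator in sight is a graded derivation of $W(\mathfrak{g})$: the maps $d_W$ and each $i_X$ are odd derivations (of degrees $+1$ and $-1$ respectively), while each $L_X$ is an even derivation. Since graded commutators and anticommutators of graded derivations are again graded derivations, each of the six identities equates two derivations of the same parity, and a derivation of $W(\mathfrak{g})=\wedge(\mathfrak{g}^*)\otimes S(\mathfrak{g}^*)$ is completely determined by its values on the algebra generators $\theta^i$ and $\phi^i$. So throughout I would reduce each identity to a finite check on generators, except where it drops out formally from an earlier one.

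Identity (6) is nothing but the defining equation $L_X=d_W i_X+i_X d_W$, so it is immediate. I would then treat (1) first, as it is the analytic heart of the lemma from which the remaining identities follow by bookkeeping. Computing $d_W^2\theta^i$, the mixed $\theta\phi$-terms cancel after using the antisymmetry $c^i_{jk}=-c^i_{kj}$ together with the commutativity $\phi^j\theta^k=\theta^k\phi^j$, while the purely cubic contribution, of the shape $\sum c^i_{jk}c^j_{lm}\theta^l\theta^m\theta^k$, vanishes precisely by the Jacobi identity $\sum_m(c^i_{jm}c^m_{kl}+c^i_{km}c^m_{lj}+c^i_{lm}c^m_{jk})=0$; the check that $d_W^2\phi^i=0$ rests on the same two inputs. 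This Jacobi-identity cancellation is the one genuinely nontrivial step, and I expect it to be the main obstacle.

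With (1) secured, the rest is purely algebraic. For (3) I would observe that $i_Y\theta^s$ is a scalar and $i_Y\phi^s=0$, both of degree $0$, so the odd derivation $i_X$ annihilates them; hence $i_Xi_Y+i_Yi_X$ vanishes on all generators. Identity (2) drops out at once by expanding $L_Xd_W-d_WL_X=i_Xd_W^2-d_W^2i_X$ through (6) and invoking (1). For (4), both $L_Xi_Y-i_YL_X$ and $i_{[X,Y]}$ are odd derivations, so it suffices to evaluate on generators with $X=e_a$, $Y=e_b$: on $\theta^s$ both sides give $c^s_{ab}$ (using $L_{e_a}\theta^s=-\sum_k c^s_{ak}\theta^k$ from Lemma~1), and on $\phi^s$ both vanish. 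Finally (5) is formal: expanding $[L_X,L_Y]=[L_X,d_Wi_Y+i_Yd_W]$ by the ordinary Leibniz rule for the operator algebra (valid since $L_X$ is even) and substituting $[L_X,d_W]=0$ from (2) together with $[L_X,i_Y]=i_{[X,Y]}$ from (4) yields $d_Wi_{[X,Y]}+i_{[X,Y]}d_W$, which is $L_{[X,Y]}$ by (6).
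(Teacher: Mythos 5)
Your proposal is correct, but it cannot be said to follow the paper's approach, because the paper offers no proof at all: Lemma~2 is disposed of with the single line ``see [4]'' (Guillemin--Sternberg). What you have written is essentially the standard verification that $(W(\mathfrak{g}), d_W, i_X, L_X)$ is a $\mathfrak{g}$-differential algebra, supplied in full where the paper defers to the literature. Your organization is sound: (6) is indeed the definition of $L_X$; the reduction of everything to values on the generators $\theta^i,\phi^i$ is legitimate because graded (anti)commutators of graded derivations are again graded derivations and $W(\mathfrak{g})$ is generated as an algebra by the $\theta^i,\phi^i$; the computation of $d_W^2\theta^i$ does cancel the mixed $\theta\phi$-terms via $c^i_{jk}=-c^i_{kj}$ (the coefficients work out as $\tfrac12+\tfrac12-1=0$) and kills the cubic term by Jacobi, with $d_W^2\phi^i=0$ resting on the same two inputs; (2) and (5) then follow formally exactly as you say, and your generator check for (4) correctly uses the paper's Lemma~1 (with bilinearity of both sides in $X,Y$ justifying the restriction to basis elements $e_a,e_b$, a point you use implicitly and could state). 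What your route buys is a self-contained paper: the one genuinely computational step, the Jacobi cancellation in (1), is isolated, and the remaining five identities are cheap bookkeeping on top of it. What the paper's citation buys is brevity, at the cost of leaving the reader to extract precisely the argument you have reconstructed.
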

\begin{proof}
see [4].
\end{proof}
So, there is a complex $(W(\mathfrak{g}), d_{W})$, the cohomology of
$(W(\mathfrak{g}), d_{W})$ is trivial (see [5]), i.e. $H^{*}(W(\mathfrak{g}))\cong \mathbb{R}$.

Let $M$ be a smooth closed manifold with $G$ acting smoothly on the
left. Let $X^{M}$ be the vector field generated by the Lie algebra
element $X\in\mathfrak{g}$ given by
$$(X^{M}f)(x)=\frac{d}{dt}f(\exp(-tX)\cdot x)\mid_{t=0}.$$
Set $d,\iota_{X^{M}},\mathcal{L}_{X^{M}}$ be the exterior
derivative, contraction and Lie derivative on $\Omega^{*}(M)$.
Denote $\iota_{X}=\iota_{X^{M}}$ and $\mathcal
{L}_{X}=\mathcal{L}_{X^{M}}$ acting on $\Omega^{*}(M)$.

\begin{definition}
The Cartan model is defined by the algebra $$S(\mathfrak{g}^{*})\otimes\Omega^{*}(M)$$
and the differential $$d_{C}\phi^{i}=0, \phi^{i}\in S(\mathfrak{g}^{*}), i=1,\cdots,n;$$
$$d_{C}\eta=(1\otimes d-\sum_{i=1}^{n}\phi^{i}\otimes(\iota_{i}+\sqrt{-1}f_{i}^{j}\iota_{j}))\eta, \eta\in\Omega^{*}(M)\otimes\mathbb{C},$$
where $\iota_{i}$ is $\iota_{e_{i}}$ and $f_{i}^{j}\in\mathbb{R}$.
The operator $d_{C}$ is called the equivariant exterior derivative.
\end{definition}

Its action on forms $\alpha\in S(\mathfrak{g}^{*})\otimes\Omega^{*}(M)$ is
$$(d_{C}\alpha)(X)=(d-\iota_{X^{M}}-\sqrt{-1}\iota_{Y^{M}})(\alpha(X))$$
where $X^{M}=c^{i}X_{i}^{M}$ is the vector field on M generated by the Lie algebra element $X=c^{i}e_{i}\in\mathfrak{g}$,$Y^{M}=f_{j}^{i}c^{j}X_{i}^{M}$(see [2]). In the artile [3] we use the operator $d+\iota_{X^{M}}+\sqrt{-1}\iota_{Y^{M}}$ to construct an complex $(\Omega^{*}(M)\otimes\mathbb{C}, d+\iota_{X^{M}}+\sqrt{-1}\iota_{Y^{M}})$ and cohomology group $H^{*}_{X+\sqrt{-1}Y}(M)$, we can do it in the same way by the operator $d-\iota_{X^{M}}-\sqrt{-1}\iota_{Y^{M}}$.

\begin{lemma}
$$d^{2}_{C}=-\sum_{i=1}^{n}\phi^{i}\otimes(\mathcal{L}_{i}+\sqrt{-1}f_{i}^{j}\mathcal{L}_{j})$$
\end{lemma}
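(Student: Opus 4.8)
The plan is to compute $d_C^2$ directly on a generator $1\otimes\eta$ with $\eta\in\Omega^*(M)\otimes\mathbb{C}$, after first reducing to this case. Write $D_i=\iota_i+\sqrt{-1}f_i^j\iota_j$ for the complexified contraction appearing in $d_C$; since $D_i=\iota_{Z_i}$ with $Z_i=e_i+\sqrt{-1}f_i^j e_j\in\mathfrak{g}\otimes\mathbb{C}$, it is an odd derivation of $\Omega^*(M)\otimes\mathbb{C}$ lowering form degree by one. Because $d_C$ is extended as a graded derivation, each $\phi^i$ sits in even total degree, and $d_C\phi^i=0$, the Leibniz rule gives $d_C(\phi^i\cdot\omega)=\phi^i\cdot d_C\omega$ with no Koszul sign. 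Hence $d_C$ is $S(\mathfrak{g}^*)$-linear and it suffices to evaluate $d_C^2$ on $1\otimes\eta$.

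First I would apply $d_C$ once to obtain $d_C(1\otimes\eta)=1\otimes d\eta-\sum_i\phi^i\otimes D_i\eta$, and then apply $d_C$ again to each summand. Using $d^2=0$ on the first term and $S(\mathfrak{g}^*)$-linearity on the second, this produces three families of terms: $-\sum_i\phi^i\otimes D_id\eta$, then $-\sum_i\phi^i\otimes dD_i\eta$, and finally $+\sum_{i,j}\phi^i\phi^j\otimes D_jD_i\eta$.

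The key step is to show the quadratic-in-$\phi$ term vanishes. I would symmetrize in the summation indices: since $S(\mathfrak{g}^*)$ is commutative we have $\phi^i\phi^j=\phi^j\phi^i$, whereas contractions anticommute, $D_iD_j+D_jD_i=0$ (this follows from $\iota_X\iota_Y+\iota_Y\iota_X=0$ applied to the complexified fields $Z_i$). Relabelling $i\leftrightarrow j$ in half the sum then gives $\sum_{i,j}\phi^i\phi^j\otimes D_jD_i\eta=\tfrac12\sum_{i,j}\phi^i\phi^j\otimes(D_jD_i+D_iD_j)\eta=0$.

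It remains to combine the two linear terms via Cartan's magic formula on $M$, namely $dD_i+D_id=\mathcal{L}_{Z_i}=\mathcal{L}_i+\sqrt{-1}f_i^j\mathcal{L}_j$. This yields $d_C^2(1\otimes\eta)=-\sum_i\phi^i\otimes(dD_i+D_id)\eta=-\sum_i\phi^i\otimes(\mathcal{L}_i+\sqrt{-1}f_i^j\mathcal{L}_j)\eta$, which is the claimed identity. The only genuinely delicate points are the sign bookkeeping in the reduction step and the identification of the anticommutator; everything else is a short direct computation, so I expect the vanishing of the double-contraction term to be the one place where the structure, commutative symmetric algebra versus anticommuting contractions, really does the work.
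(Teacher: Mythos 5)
Your proposal is correct and follows essentially the same route as the paper: expand $d_C^2$, kill the quadratic term $\sum_{i,j}\phi^i\phi^j\otimes D_jD_i$ by the symmetry of $\phi^i\phi^j$ against the anticommutativity of contractions, and identify the cross terms via Cartan's formula $dD_i+D_id=\mathcal{L}_i+\sqrt{-1}f_i^j\mathcal{L}_j$. The only difference is presentational — the paper performs the computation as a one-line operator identity and leaves the vanishing of the quadratic term implicit, whereas you work on $1\otimes\eta$ and justify that step explicitly, which is a welcome addition rather than a divergence.
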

\begin{proof}
By the lemma 2. we have
\begin{align*}
d^{2}_{C} & =(1\otimes d-\sum_{i=1}^{n}\phi^{i}\otimes(\iota_{i}+\sqrt{-1}f_{i}^{j}\iota_{j}))(1\otimes d-\sum_{i=1}^{n}\phi^{i}\otimes(\iota_{i}+\sqrt{-1}f_{i}^{j}\iota_{j}))\\
          & = -\sum_{i=1}^{n}\phi^{i}\otimes [d(\iota_{i}+\sqrt{-1}f_{i}^{j}\iota_{j}))+(\iota_{i}+\sqrt{-1}f_{i}^{j}\iota_{j}))d]\\
          & = -\sum_{i=1}^{n}\phi^{i}\otimes(\mathcal{L}_{i}+\sqrt{-1}f_{i}^{j}\mathcal{L}_{j})
              \end{align*}
\end{proof}

Let $(S(\mathfrak{g}^{*})\otimes\Omega^{*}(M))^{\widetilde{G}}$ be the subalgebra of $S(\mathfrak{g}^{*})\otimes\Omega^{*}(M)$ which satisfied
$$(\sum_{i=1}^{n}\phi^{i}\otimes(\mathcal{L}_{i}+\sqrt{-1}f_{i}^{j}\mathcal{L}_{j}))\alpha=0, \forall\alpha\in(S(\mathfrak{g}^{*})\otimes\Omega^{*}(M))^{\widetilde{G}}$$
So we get the complex $((S(\mathfrak{g}^{*})\otimes\Omega^{*}(M))^{\widetilde{G}},d_{C})$.
The equivariantly closed form is $\forall\alpha\in(S(\mathfrak{g}^{*})\otimes\Omega^{*}(M))^{\widetilde{G}}$ with $d_{C}\alpha=0$,
the equivariantly exact form is $\forall\alpha\in(S(\mathfrak{g}^{*})\otimes\Omega^{*}(M))^{\widetilde{G}}$ there is $\beta\in(S(\mathfrak{g}^{*})\otimes\Omega^{*}(M))^{\widetilde{G}}$ with $\alpha=d_{C}\beta$.

As in [8] we can define the equivariant connection
$$\nabla_{\mathfrak{g}}=1\otimes\nabla-\sum_{i=1}^{n}\phi^{i}\otimes(\iota_{i}+\sqrt{-1}f_{i}^{j}\iota_{j})$$
and the equivariant curvature of the connection
$$F_{\mathfrak{g}}=(\nabla_{\mathfrak{g}})^{2}+\sum_{i=1}^{n}\phi^{i}\otimes(\mathcal{L}_{i}+\sqrt{-1}f_{i}^{j}\mathcal{L}_{j})$$

\section{BRST model}
This section is inspired by [5].
First, we will to construct the BRST differential algebra. The
algebra is
$$B=W(\mathfrak{g})\otimes\Omega^{*}(M).$$
The BRST operator is
$$\delta=d_{W}\otimes 1+1\otimes d+\sum_{i=1}^{n}\theta^{i}\otimes(\mathcal{L}_{i}+\sqrt{-1}f_{i}^{j}\mathcal{L}_{j})-\sum_{a=1}^{n}\phi^{a}\otimes(\iota_{a}+\sqrt{-1}f_{a}^{b}\iota_{b})+\frac{1}{2}\sum_{j,k}c_{jk}^{i}\theta^{j}\theta^{k}\otimes(\iota_{i}+\sqrt{-1}f_{i}^{j}\iota_{j})$$
$$-\sum_{j<k}\theta^{j}\theta^{k}\otimes((\mathcal{L}_{j}+\sqrt{-1}f_{j}^{h}\mathcal{L}_{h})(\iota_{k}+\sqrt{-1}f_{k}^{g}\iota_{g})-(\iota_{j}+\sqrt{-1}f_{j}^{h}\iota_{h})(\mathcal{L}_{k}+\sqrt{-1}f_{k}^{g}\mathcal{L}_{g}))$$
where $\mathcal{L}_{i}$ is $\mathcal{L}_{e_{i}}$ and $\iota_{a}$ is
$\iota_{e_{a}}$.

\begin{lemma}
On the algebra
$W(\mathfrak{g})\otimes\Omega^{*}(M)$, we have
$\delta^{2}=0$.
\end{lemma}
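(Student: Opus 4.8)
The plan is to realize $\delta$ as a conjugate of the manifestly square-zero total Weil differential $D=d_{W}\otimes 1+1\otimes d$ by an even automorphism of $W(\mathfrak{g})\otimes\Omega^{*}(M)$, so that $\delta^{2}=0$ follows formally. Throughout I abbreviate the twisted operators by $\tilde{\iota}_{i}=\iota_{i}+\sqrt{-1}f_{i}^{j}\iota_{j}$ and $\tilde{\mathcal{L}}_{i}=\mathcal{L}_{i}+\sqrt{-1}f_{i}^{j}\mathcal{L}_{j}$. The first thing I would record is that these are nothing but the contraction and Lie derivative on $\Omega^{*}(M)\otimes\mathbb{C}$ attached to the complex vector field $\tilde{e}_{i}^{M}=e_{i}^{M}+\sqrt{-1}f_{i}^{j}e_{j}^{M}$; hence, extending ordinary Cartan calculus $\mathbb{C}$-linearly, they satisfy $d\tilde{\iota}_{i}+\tilde{\iota}_{i}d=\tilde{\mathcal{L}}_{i}$, $\tilde{\iota}_{i}\tilde{\iota}_{j}+\tilde{\iota}_{j}\tilde{\iota}_{i}=0$, $\tilde{\mathcal{L}}_{i}\tilde{\iota}_{j}-\tilde{\iota}_{j}\tilde{\mathcal{L}}_{i}=\tilde{\iota}_{[\tilde{e}_{i},\tilde{e}_{j}]}$ and $[\tilde{\mathcal{L}}_{i},d]=0$. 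Together with the six identities of Lemma 2 for $W(\mathfrak{g})$, these are the only relations I expect to use.

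Next I would set $A=\sum_{a}\theta^{a}\otimes\tilde{\iota}_{a}$ and $\Phi=\exp(A)$. Since $\theta^{a}$ raises the Weil degree by one while $\tilde{\iota}_{a}$ lowers the form degree by one, $A$ is an even operator, so $\Phi$ is a well-defined automorphism of $W(\mathfrak{g})\otimes\Omega^{*}(M)$. If one verifies that $\delta=\Phi D\Phi^{-1}$, then $\delta^{2}=\Phi D^{2}\Phi^{-1}$, and $D^{2}=0$ because $d_{W}^{2}=0$ (Lemma 2(1)), $d^{2}=0$, and the cross terms $(d_{W}\otimes 1)(1\otimes d)+(1\otimes d)(d_{W}\otimes 1)$ cancel by the Koszul sign rule; hence $\delta^{2}=0$.

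The substance of the argument is therefore the identity $\delta=\Phi D\Phi^{-1}$, which I would establish by the Hadamard expansion $\Phi D\Phi^{-1}=\sum_{k\ge 0}\frac{1}{k!}\mathrm{ad}_{A}^{k}(D)$. A short computation with the graded tensor signs gives $\mathrm{ad}_{A}(1\otimes d)=\sum_{a}\theta^{a}\otimes\tilde{\mathcal{L}}_{a}$ (Cartan's magic formula) and, inserting $d_{W}\theta^{a}=-\tfrac12\sum_{j,k}c^{a}_{jk}\theta^{j}\theta^{k}+\phi^{a}$, $\mathrm{ad}_{A}(d_{W}\otimes 1)=\tfrac12\sum_{j,k}c^{i}_{jk}\theta^{j}\theta^{k}\otimes\tilde{\iota}_{i}-\sum_{a}\phi^{a}\otimes\tilde{\iota}_{a}$; these reproduce the four first-order terms of $\delta$. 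Among the summands of $\mathrm{ad}_{A}(D)$ only $\sum_{a}\theta^{a}\otimes\tilde{\mathcal{L}}_{a}$ carries a Lie derivative in its second factor, so only it survives a further bracket with $A$ (the contraction-only summands are annihilated by $\tilde{\iota}_{i}\tilde{\iota}_{j}+\tilde{\iota}_{j}\tilde{\iota}_{i}=0$); after reducing $[A,\sum_{a}\theta^{a}\otimes\tilde{\mathcal{L}}_{a}]$ with the twisted relation $\tilde{\mathcal{L}}\tilde{\iota}-\tilde{\iota}\tilde{\mathcal{L}}=\tilde{\iota}_{[\,\cdot\,,\,\cdot\,]}$ its second factor becomes a single contraction, whence $\mathrm{ad}_{A}^{3}(D)=0$ and the series truncates after $\tfrac12\mathrm{ad}_{A}^{2}(D)$, the unique term quadratic in the $\theta$'s.

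I expect the matching of this last quadratic term against the written $-\sum_{j<k}\theta^{j}\theta^{k}\otimes(\tilde{\mathcal{L}}_{j}\tilde{\iota}_{k}-\tilde{\iota}_{j}\tilde{\mathcal{L}}_{k})$ contribution to be the main obstacle: it is precisely here that the twisted commutation relation $\tilde{\mathcal{L}}_{j}\tilde{\iota}_{k}-\tilde{\iota}_{k}\tilde{\mathcal{L}}_{j}=\tilde{\iota}_{[\tilde{e}_{j},\tilde{e}_{k}]}$, the antisymmetrization forced by $\theta^{j}\theta^{k}=-\theta^{k}\theta^{j}$, and the exact graded signs of $\mathrm{ad}_{A}^{2}$ all interact, and where the structure-constant term produced by $d_{W}$ must be reconciled with the bracket $[\tilde{e}_{j},\tilde{e}_{k}]$. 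Should the conjugation identity require correction, the same ingredients give a direct route: expand $\delta^{2}$ and collect by ghost number, so that the $\theta$-free part is $(1\otimes d)^{2}=0$, the parts linear in $\theta$ or $\phi$ cancel by $d\tilde{\iota}+\tilde{\iota}d=\tilde{\mathcal{L}}$ and $[\tilde{\mathcal{L}},d]=0$, the quadratic parts cancel by the twisted Cartan relation together with Lemma 2, and the cubic $\theta$-terms vanish by the Jacobi identity for $c^{i}_{jk}$ and the anticommutativity of contractions.
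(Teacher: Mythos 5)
Your proposal follows exactly the route of the paper: the paper's entire proof of this lemma is the bare assertion, ``by computation,'' that $\delta=\exp(A)\,(d_{W}\otimes 1+1\otimes d)\,\exp(-A)$ with $A=\sum_{i}\theta^{i}\otimes(\iota_{i}+\sqrt{-1}f_{i}^{j}\iota_{j})$, followed by the same formal cancellation $\delta^{2}=\Phi D^{2}\Phi^{-1}=0$ that you give. What you add is the computation the paper omits, and your expansion is correct as far as it goes: $\mathrm{ad}_{A}(1\otimes d)=\sum_{a}\theta^{a}\otimes\tilde{\mathcal{L}}_{a}$, $\mathrm{ad}_{A}(d_{W}\otimes 1)=\tfrac12\sum_{j,k}c^{a}_{jk}\theta^{j}\theta^{k}\otimes\tilde{\iota}_{a}-\sum_{a}\phi^{a}\otimes\tilde{\iota}_{a}$, and the Hadamard series truncates after the quadratic term because $\mathrm{ad}_{A}^{2}(D)$ has only contractions in its second tensor factor.

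The ``main obstacle'' you flag is real, and resolving it is instructive: it uncovers an index slip in the paper's displayed $\delta$. Carrying out the last bracket gives
\[
\tfrac12\,\mathrm{ad}_{A}^{2}(D)=-\sum_{j<k}\theta^{j}\theta^{k}\otimes\bigl(\tilde{\mathcal{L}}_{j}\tilde{\iota}_{k}-\tilde{\iota}_{k}\tilde{\mathcal{L}}_{j}\bigr),
\]
the genuine commutator, whose second factor collapses to the single contraction $\iota_{[\tilde{e}_{j},\tilde{e}_{k}]}$ by the twisted Cartan relation, exactly as your truncation argument requires. The paper instead prints $\tilde{\mathcal{L}}_{j}\tilde{\iota}_{k}-\tilde{\iota}_{j}\tilde{\mathcal{L}}_{k}$, with $j$ leading in both products; the discrepancy $\sum_{j<k}\theta^{j}\theta^{k}\otimes(\tilde{\iota}_{k}\tilde{\mathcal{L}}_{j}-\tilde{\iota}_{j}\tilde{\mathcal{L}}_{k})$ is a nonzero second-order operator, so with the operator as literally printed the conjugation identity, and hence the lemma, fails. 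With the commutator reading everything matches, and a consistency check confirms it is the intended one: at $f_{i}^{j}=0$ the commutator term equals $-\tfrac12\sum_{j,k}c^{i}_{jk}\theta^{j}\theta^{k}\otimes\iota_{i}$ and cancels the structure-constant term, recovering Kalkman's standard BRST operator $d_{W}\otimes 1+1\otimes d+\sum_{a}\theta^{a}\otimes\mathcal{L}_{a}-\sum_{a}\phi^{a}\otimes\iota_{a}$, whereas the literal reading leaves a spurious residue even in that classical case. So your proof is complete and agrees with the paper's once the quadratic term of $\delta$ is read as the commutator; your instinct that this is precisely where the twisted relation, the antisymmetrization, and the $d_{W}\theta^{a}$ term must interact was accurate.
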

\begin{proof}
By computation, we have $$\delta=\exp(\sum_{i=1}^{n}\theta^{i}\otimes(\iota_{i}+\sqrt{-1}f_{i}^{j}\iota_{j})) (d_{W}\otimes 1+1\otimes d)\exp(-\sum_{i=1}^{n}\theta^{i}\otimes(\iota_{i}+\sqrt{-1}f_{i}^{j}\iota_{j}))$$
where  $\iota_{a}$ is $\iota_{e_{a}}$.
So we have
$$\delta^{2}=\exp(\sum_{i=1}^{n}\theta^{i}\otimes(\iota_{i}+\sqrt{-1}f_{i}^{j}\iota_{j}))(d_{W}\otimes 1+1\otimes d)
\exp(-\sum_{i=1}^{n}\theta^{i}\otimes(\iota_{i}+\sqrt{-1}f_{i}^{j}\iota_{j}))\cdot$$
$$\exp(\sum_{i=1}^{n}\theta^{i}\otimes(\iota_{i}+\sqrt{-1}f_{i}^{j}\iota_{j}))(d_{W}\otimes 1+1\otimes d)\exp(-\sum_{i=1}^{n}\theta^{i}\otimes(\iota_{i}+\sqrt{-1}f_{i}^{j}\iota_{j}))$$
$$=\exp(\sum_{i=1}^{n}\theta^{i}\otimes(\iota_{i}+\sqrt{-1}f_{i}^{j}\iota_{j}))(d_{W}\otimes 1+1\otimes d)^{2}\exp(-\sum_{i=1}^{n}\theta^{i}\otimes(\iota_{i}+\sqrt{-1}f_{i}^{j}\iota_{j}))$$

$$=0$$
\end{proof}

So we get the BRST differential algebra
$(W(\mathfrak{g})\otimes\Omega^{*}(M),\delta)$.

\begin{lemma}
Fixing the index $i$ and $k$
$$(\theta^{i}\otimes(\iota_{i}+\sqrt{-1}f_{i}^{j}\iota_{j}))(\theta^{k}\otimes(\iota_{k}+\sqrt{-1}f_{k}^{l}\iota_{l}))=(\theta^{k}\otimes(\iota_{k}+\sqrt{-1}f_{k}^{l}\iota_{l}))(\theta^{i}\otimes(\iota_{i}+\sqrt{-1}f_{i}^{j}\iota_{j}))$$
\end{lemma}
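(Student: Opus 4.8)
The plan is to reduce the asserted commutation to two elementary anticommutation facts, one living in the exterior factor $\wedge(\mathfrak{g}^{*})$ and one among the contractions on $\Omega^{*}(M)$, and then to observe that the two sign changes they produce cancel. To keep the bookkeeping clean I would first abbreviate the contraction-type operators acting on the manifold factor by writing $P_{i}:=\iota_{i}+\sqrt{-1}\,f_{i}^{j}\iota_{j}$ and $P_{k}:=\iota_{k}+\sqrt{-1}\,f_{k}^{l}\iota_{l}$, so that the two operators in the statement become simply $\theta^{i}\otimes P_{i}$ and $\theta^{k}\otimes P_{k}$. Note that $\theta^{i}$ acts on $W(\mathfrak{g})$ by exterior multiplication while $P_{i}$ acts on $\Omega^{*}(M)$, so the only signs that can arise come from anticommutativity inside each of the two factors.

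First I would record the two facts. In $\wedge(\mathfrak{g}^{*})$ the generators anticommute, $\theta^{i}\theta^{k}=-\theta^{k}\theta^{i}$. On the manifold side the contractions by vector fields anticommute, $\iota_{a}\iota_{b}=-\iota_{b}\iota_{a}$ for all $a,b$, which is the analogue for $\Omega^{*}(M)$ of Lemma~2(3). Expanding $P_{i}P_{k}$ and $P_{k}P_{i}$ by bilinearity and applying this anticommutativity term by term, I would then deduce $P_{i}P_{k}=-P_{k}P_{i}$; that is, the combined operators $P_{i}$ and $P_{k}$ anticommute exactly as the individual contractions do.

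With these two identities the computation is immediate. Multiplying out the right-hand side gives $(\theta^{k}\otimes P_{k})(\theta^{i}\otimes P_{i})=(\theta^{k}\theta^{i})\otimes(P_{k}P_{i})$, and substituting $\theta^{k}\theta^{i}=-\theta^{i}\theta^{k}$ together with $P_{k}P_{i}=-P_{i}P_{k}$ yields a factor $(-1)(-1)=1$, so the right-hand side equals $(\theta^{i}\theta^{k})\otimes(P_{i}P_{k})=(\theta^{i}\otimes P_{i})(\theta^{k}\otimes P_{k})$, which is the left-hand side. If one prefers to keep the Koszul sign arising from the graded tensor product, the same extra factor $(-1)^{1\cdot1}$ appears for both orderings and therefore does not affect the equality.

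There is no genuine obstacle here; the only thing requiring care is the sign bookkeeping. The entire content is that both building blocks are odd and anticommute, so interchanging $i$ and $k$ introduces two minus signs that cancel. The lemma is thus a purely formal consequence of the graded structure, and I would use it precisely to justify reorganizing the $\theta^{j}\theta^{k}\otimes\iota\iota$ terms appearing in the BRST operator $\delta$.
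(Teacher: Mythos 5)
Your proof is correct and takes essentially the same route as the paper's: both reduce the identity to the elementary anticommutations $\theta^{i}\theta^{k}=-\theta^{k}\theta^{i}$ in $\wedge(\mathfrak{g}^{*})$ and $\iota_{a}\iota_{b}=-\iota_{b}\iota_{a}$ on $\Omega^{*}(M)$, with the two resulting signs cancelling. Your packaging via $P_{i}=\iota_{i}+\sqrt{-1}f_{i}^{j}\iota_{j}$ merely streamlines the paper's term-by-term expansion and makes its separate case $i=k$ unnecessary, and your remark that the Koszul sign of the graded tensor product appears identically in both orderings correctly accounts for the sign convention the paper uses in its displayed computation.
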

\begin{proof}

If $i=k$, we have
$$(\theta^{i}\otimes(\iota_{i}+\sqrt{-1}f_{i}^{j}\iota_{j}))(\theta^{k}\otimes(\iota_{k}+\sqrt{-1}f_{k}^{l}\iota_{l}))=0=(\theta^{k}\otimes(\iota_{k}+\sqrt{-1}f_{k}^{l}\iota_{l}))(\theta^{i}\otimes(\iota_{i}+\sqrt{-1}f_{i}^{j}\iota_{j}))$$
If $i\neq k$, then because
$$(\theta^{i}\otimes\iota_{i})(\theta^{k}\otimes\iota_{k})=-\theta^{i}\wedge\theta^{k}\otimes\iota_{i}\iota_{k}=-\theta^{k}\wedge\theta^{i}\otimes\iota_{k}\iota_{i}=(\theta^{k}\otimes\iota_{k})(\theta^{i}\otimes\iota_{i})$$
$$(\theta^{i}\otimes(\sqrt{-1}f_{i}^{j}\iota_{j}))(\theta^{k}\otimes\iota_{k})=-\theta^{i}\wedge\theta^{k}\otimes(\sqrt{-1}f_{i}^{j}\iota_{j})\iota_{k}=-\theta^{k}\wedge\theta^{i}\otimes\iota_{k}(\sqrt{-1}f_{i}^{j}\iota_{j})=(\theta^{k}\otimes\iota_{k})(\theta^{i}\otimes(\sqrt{-1}f_{i}^{j}\iota_{j}))$$
So we get the result.
\end{proof}

Let
$\psi:W(\mathfrak{g})\otimes\Omega^{*}(M)\rightarrow
W(\mathfrak{g})\otimes\Omega^{*}(M)$ be the map
$$\psi=\prod_{i}(1-\theta^{i}\otimes(\iota_{i}+\sqrt{-1}f_{i}^{j}\iota_{j})).$$
By computation
$$(1-\theta^{1}\otimes(\iota_{1}+\sqrt{-1}f_{1}^{j}\iota_{j}))(1-\theta^{2}\otimes(\iota_{2}+\sqrt{-1}f_{2}^{j}\iota_{j}))\cdots(1-\theta^{n}\otimes(\iota_{n}+\sqrt{-1}f_{n}^{j}\iota_{j}))$$
we have
$$\psi=\exp(-\sum_{i=1}^{n}\theta^{i}\otimes(\iota_{i}+\sqrt{-1}f_{i}^{j}\iota_{j})).$$
In the section 5. we will discuss the map $\psi$.

\section{Weil model}
The exterior derivative operator on
$W(\mathfrak{g})\otimes\Omega^{*}(M)$ is defined by
$$D\doteq d_{W}\otimes 1+1\otimes d,$$
the contraction operator is defined by
$$\widetilde{i}_{X}\doteq i_{X}\otimes 1+1\otimes \iota_{X}$$
and Lie derivative be defined by
$$\widetilde{L}_{X}\doteq L_{X}\otimes 1+1\otimes \mathcal{L}_{X}$$

\begin{lemma}
The operators $\widetilde{i}_{X}, D, \widetilde{L}_{X}$ on
$W(\mathfrak{g})\otimes\Omega^{*}(M)$ satisfy the following
identities:
\begin{description}
\item[(1)]$D^{2}=0$;
\item[(2)]$\widetilde{L}_{X}\cdot D-D\cdot \widetilde{L}_{X}=0$, for any
$X\in\mathfrak{g}$;
\item[(3)]$\widetilde{i}_{X}\widetilde{i}_{Y}+\widetilde{i}_{Y}\widetilde{i}_{X}=0$, for any $X,Y\in\mathfrak{g}$;
\item[(4)]$\widetilde{L}_{X}\widetilde{i}_{Y}-\widetilde{i}_{Y}\widetilde{L}_{X}=\widetilde{i}_{[X,Y]}$, for any
$X,Y\in\mathfrak{g}$;
\item[(5)]$\widetilde{L}_{X}\widetilde{L}_{Y}-\widetilde{L}_{Y}\widetilde{L}_{X}=\widetilde{L}_{[X,Y]}$, for any
$X,Y\in\mathfrak{g}$;
\item[(6)]$\widetilde{L}_{X}=D\cdot\widetilde{i}_{X}+\widetilde{i}_{X}\cdot
D$, for any $X\in\mathfrak{g}$.
\end{description}
\end{lemma}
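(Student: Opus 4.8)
The plan is to obtain all six identities on $W(\mathfrak{g})\otimes\Omega^{*}(M)$ by lifting the corresponding identities from each tensor factor. The first factor already supplies them: Lemma 2 states that $i_{X},d_{W},L_{X}$ obey (1)--(6) on $W(\mathfrak{g})$. The second factor supplies them classically: on a $G$-manifold the operators $\iota_{X},d,\mathcal{L}_{X}$ satisfy the identical relations, i.e. $d^{2}=0$, $\mathcal{L}_{X}d=d\mathcal{L}_{X}$, $\iota_{X}\iota_{Y}+\iota_{Y}\iota_{X}=0$, $\mathcal{L}_{X}\iota_{Y}-\iota_{Y}\mathcal{L}_{X}=\iota_{[X,Y]}$, $\mathcal{L}_{X}\mathcal{L}_{Y}-\mathcal{L}_{Y}\mathcal{L}_{X}=\mathcal{L}_{[X,Y]}$ and $d\iota_{X}+\iota_{X}d=\mathcal{L}_{X}$ (Cartan's magic formula). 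What must be checked is that these survive on the graded tensor product, where products of operators are formed by the Koszul rule $(A\otimes B)(C\otimes D)=(-1)^{|B|\,|C|}\,AC\otimes BD$; equivalently, $1\otimes d$ and $1\otimes\iota_{X}$ acquire a sign $(-1)^{\deg}$ when passing the first factor.

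For each of the six relations I would substitute $D=d_{W}\otimes 1+1\otimes d$, $\widetilde{i}_{X}=i_{X}\otimes 1+1\otimes\iota_{X}$ and $\widetilde{L}_{X}=L_{X}\otimes 1+1\otimes\mathcal{L}_{X}$, expand the product, and separate the terms into \emph{diagonal} ones (both operators acting within the same leg) and \emph{cross} ones (one operator on each leg). The diagonal terms recombine, through Lemma 2 on the left leg and the classical identities on the right leg, into the desired relation; the cross terms always cancel. For example $D^{2}=d_{W}^{2}\otimes 1+1\otimes d^{2}+\big[(d_{W}\otimes 1)(1\otimes d)+(1\otimes d)(d_{W}\otimes 1)\big]$, the first two terms vanishing by $d_{W}^{2}=0=d^{2}$ and the bracket equalling $d_{W}\otimes d-d_{W}\otimes d=0$ since the two odd operators $d_{W}\otimes 1$ and $1\otimes d$ anticommute under the Koszul rule. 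Identity (6) follows by the same split: $D\widetilde{i}_{X}+\widetilde{i}_{X}D$ has diagonal part $(d_{W}i_{X}+i_{X}d_{W})\otimes 1+1\otimes(d\iota_{X}+\iota_{X}d)=L_{X}\otimes 1+1\otimes\mathcal{L}_{X}=\widetilde{L}_{X}$ by Lemma 2(6) and Cartan's formula, while the cross terms $\pm\,d_{W}\otimes\iota_{X}$ and $\mp\,i_{X}\otimes d$ cancel.

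Identities (2)--(5) are handled identically. For (3) the cross terms $i_{X}\otimes\iota_{Y}$ and $i_{Y}\otimes\iota_{X}$ occur in the two products with opposite signs and cancel upon addition, the diagonal part being $(i_{X}i_{Y}+i_{Y}i_{X})\otimes 1+1\otimes(\iota_{X}\iota_{Y}+\iota_{Y}\iota_{X})=0$. For (4) one uses Lemma 2(4) together with its manifold counterpart, and since $\widetilde{L}_{X}$ is an \emph{even} operator its cross terms $L_{X}\otimes\iota_{Y}$ and $i_{Y}\otimes\mathcal{L}_{X}$ appear with the same sign in both orderings and so cancel in the commutator; relations (2) and (5) are even simpler, because $\widetilde{L}_{X}$ being even introduces no Koszul signs at all, and they reduce at once to $L_{X}d_{W}=d_{W}L_{X}$ and the bracket identity on the left leg combined with their right-leg analogues.

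The only genuine obstacle is the sign bookkeeping in the graded tensor product: every cancellation of cross terms rests on the odd operators $d_{W},d,i_{X},\iota_{X}$ anticommuting across the two legs, which is precisely the content of the Koszul sign $(-1)^{|B|\,|C|}$. Once this convention is fixed consistently---so that, in particular, $D^{2}=0$ as already invoked in the proof of $\delta^{2}=0$---each identity collapses mechanically onto the factorwise identities of Lemma 2 and the classical Cartan calculus, and no further input is required.
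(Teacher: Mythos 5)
Your proposal is correct, but it cannot coincide with the paper's proof for a simple reason: the paper offers no proof at all, its entire argument for this lemma being the citation ``see [4]'' (Guillemin--Sternberg). What you have written is, in effect, the standard verification that the citation points to, and it is sound: splitting each relation into diagonal and cross terms, the diagonal part reduces to Lemma 2 on the $W(\mathfrak{g})$ leg and the classical Cartan calculus on the $\Omega^{*}(M)$ leg, while the cross terms cancel --- by anticommutation of odd operators across the legs in (1), (3), (6), and trivially in (2), (4), (5) because $\widetilde{L}_{X}$ is even. The one point on which your argument genuinely depends, the Koszul sign rule $(A\otimes B)(C\otimes D)=(-1)^{|B||C|}AC\otimes BD$, is consistent with the conventions the paper uses elsewhere: the computation in Lemma 5, $(\theta^{i}\otimes\iota_{i})(\theta^{k}\otimes\iota_{k})=-\theta^{i}\wedge\theta^{k}\otimes\iota_{i}\iota_{k}$, is exactly this sign, and the paper's own proof of Lemma 7 (the complexified analogue of identity (6)) silently performs the very diagonal/cross cancellation you spell out. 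So your write-up supplies the details the paper delegates to the literature, in the same spirit as the paper's later computations; what it buys is a self-contained proof, at the modest cost of having to fix the graded tensor product convention explicitly, which you do. No gap.
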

\begin{proof}
see [4].
\end{proof}

Set $$\widetilde{i}_{X+\sqrt{-1}Y}\doteq i_{X}\otimes 1+1\otimes (\iota_{X}+\sqrt{-1}\iota_{Y})$$
be the contraction operator on $W(\mathfrak{g})\otimes\Omega^{*}(M)$ induced by the contraction of $X+\sqrt{-1}Y$.

Set $$\widetilde{L}_{X+\sqrt{-1}Y}\doteq L_{X}\otimes 1+1\otimes (\mathcal{L}_{X}+\sqrt{-1}\mathcal{L}_{Y})$$
be the Lie derivative on $W(\mathfrak{g})\otimes\Omega^{*}(M)$ about $X+\sqrt{-1}Y$.

\begin{lemma}
$$\widetilde{L}_{X+\sqrt{-1}Y}=D\cdot\widetilde{i}_{X+\sqrt{-1}Y}+\widetilde{i}_{X+\sqrt{-1}Y}\cdot D$$ for any $X,Y\in\mathfrak{g}$.
\end{lemma}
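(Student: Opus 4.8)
The plan is to reduce the statement to the already-established Cartan homotopy formula of Lemma 6(6) together with the classical Cartan formula $d\iota_Y + \iota_Y d = \mathcal{L}_Y$ on $\Omega^{*}(M)$. The starting observation is that the contraction defined here is \emph{not} $\widetilde{i}_X + \sqrt{-1}\,\widetilde{i}_Y$, since its Weil-algebra part carries only $i_X$ and not $i_X + \sqrt{-1}\,i_Y$. Instead one has the decomposition
$$\widetilde{i}_{X+\sqrt{-1}Y} = \widetilde{i}_X + \sqrt{-1}\,(1\otimes\iota_Y),$$
which isolates the extra imaginary contraction acting purely on the manifold factor.

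Because the anticommutator $\{D,\,\cdot\,\}$ is linear in its second argument, I would expand
$$D\cdot\widetilde{i}_{X+\sqrt{-1}Y}+\widetilde{i}_{X+\sqrt{-1}Y}\cdot D = \bigl(D\widetilde{i}_X+\widetilde{i}_X D\bigr)+\sqrt{-1}\bigl(D(1\otimes\iota_Y)+(1\otimes\iota_Y)D\bigr).$$
The first bracket is exactly $\widetilde{L}_X = L_X\otimes 1 + 1\otimes\mathcal{L}_X$ by Lemma 6(6), so nothing further is needed there. For the second bracket I would substitute $D = d_W\otimes 1 + 1\otimes d$ and split into two pieces: the $(1\otimes d)$-piece gives $1\otimes(d\iota_Y+\iota_Y d) = 1\otimes\mathcal{L}_Y$ by the classical Cartan formula, while the $(d_W\otimes 1)$-piece is the cross term $(d_W\otimes 1)(1\otimes\iota_Y)+(1\otimes\iota_Y)(d_W\otimes 1)$. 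Adding up then yields $L_X\otimes 1 + 1\otimes\mathcal{L}_X + \sqrt{-1}\,(1\otimes\mathcal{L}_Y) = L_X\otimes 1 + 1\otimes(\mathcal{L}_X+\sqrt{-1}\mathcal{L}_Y)$, which is precisely $\widetilde{L}_{X+\sqrt{-1}Y}$, completing the argument.

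The only genuinely delicate point is the vanishing of the cross term $(d_W\otimes 1)(1\otimes\iota_Y)+(1\otimes\iota_Y)(d_W\otimes 1) = 0$. This rests on the Koszul sign rule governing how graded operators act on the tensor product: since $d_W$ and $\iota_Y$ act on different factors and are both odd, passing one through the other on an element $w\otimes\omega$ produces opposite signs for the two orderings, so the two terms cancel. I expect this sign bookkeeping to be the main (and essentially only) obstacle; it is the same cancellation that already underlies Lemma 6(6), so once that convention is in place the remainder of the computation is immediate.
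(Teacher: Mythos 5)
Your proof is correct, and it takes a mildly but genuinely different route from the paper's. The paper's proof is a single direct expansion: it writes $D=d_{W}\otimes 1+1\otimes d$ and $\widetilde{i}_{X+\sqrt{-1}Y}=i_{X}\otimes 1+1\otimes(\iota_{X}+\sqrt{-1}\iota_{Y})$, keeps only the same-factor anticommutators $d_{W}i_{X}+i_{X}d_{W}=L_{X}$ (Lemma 2(6)) and $d(\iota_{X}+\sqrt{-1}\iota_{Y})+(\iota_{X}+\sqrt{-1}\iota_{Y})d=\mathcal{L}_{X}+\sqrt{-1}\mathcal{L}_{Y}$, and \emph{silently} discards all mixed terms such as $(d_{W}\otimes 1)(1\otimes\iota_{X})+(1\otimes\iota_{X})(d_{W}\otimes 1)$. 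You instead split off only the genuinely new piece, $\widetilde{i}_{X+\sqrt{-1}Y}=\widetilde{i}_{X}+\sqrt{-1}\,(1\otimes\iota_{Y})$, absorb the entire real part into the already-established Lemma 6(6), and handle by hand the single remaining anticommutator $D(1\otimes\iota_{Y})+(1\otimes\iota_{Y})D$. Both arguments rest on the same mechanism — factorwise Cartan formulas plus Koszul cancellation of odd-odd cross terms in the graded tensor product — but your organization buys two things: it reuses the Weil-model homotopy formula rather than re-deriving it, and it makes explicit the cross-term cancellation that the paper's computation suppresses (the paper's second displayed line is valid only because of precisely the sign argument you spell out, so your version is the more rigorous of the two). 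Your opening observation that $\widetilde{i}_{X+\sqrt{-1}Y}\neq\widetilde{i}_{X}+\sqrt{-1}\,\widetilde{i}_{Y}$, because the Weil-algebra factor carries no $\sqrt{-1}\,i_{Y}$ term, is also correct and worth keeping: it is consistent with the matching fact that $\widetilde{L}_{X+\sqrt{-1}Y}=\widetilde{L}_{X}+\sqrt{-1}\,(1\otimes\mathcal{L}_{Y})$, and it explains why your asymmetric decomposition is the natural one.
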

\begin{proof}
\begin{align*}
D\cdot\widetilde{i}_{X+\sqrt{-1}Y}+\widetilde{i}_{X+\sqrt{-1}Y}\cdot D & =(d_{W}\otimes 1+1\otimes d)\cdot\widetilde{i}_{X+\sqrt{-1}Y}+\widetilde{i}_{X+\sqrt{-1}Y}\cdot(d_{W}\otimes 1+1\otimes d)\\
 & = d_{W}i_{X}\otimes1+i_{X}d_{W}\otimes1+1\otimes d(\iota_{X}+\sqrt{-1}\iota_{Y})+1\otimes(\iota_{X}+\sqrt{-1}\iota_{Y})d\\
 & = L_{X}\otimes1+1\otimes(\mathcal{L}_{X}+\sqrt{-1}\mathcal{L}_{Y})\\
 & = \widetilde{L}_{X+\sqrt{-1}Y}
\end{align*}
\end{proof}

\begin{definition}
An element $\eta\in
W(\mathfrak{g})\otimes\Omega^{*}(M)$ is
$\mathbf{basic}$ if it satisfies $\widetilde{i}_{X+\sqrt{-1}Y}\eta=0$,
$\widetilde{L}_{X+\sqrt{-1}Y}\eta=0$ for any $X,Y\in\mathfrak{g}$. Set
$(W(\mathfrak{g})\otimes\Omega^{*}(M))_{bas}$ be
the set of basic elements.
\end{definition}

\begin{lemma}
The operator D preserves
$(W(\mathfrak{g})\otimes\Omega^{*}(M))_{bas}$.
\end{lemma}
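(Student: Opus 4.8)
The plan is to verify that if $\eta\in(W(\mathfrak{g})\otimes\Omega^{*}(M))_{bas}$ then $D\eta$ again satisfies the two defining conditions of a basic element, namely $\widetilde{i}_{X+\sqrt{-1}Y}(D\eta)=0$ and $\widetilde{L}_{X+\sqrt{-1}Y}(D\eta)=0$ for all $X,Y\in\mathfrak{g}$. The horizontality (contraction) condition follows immediately from Lemma 7. Rearranging the identity of that lemma gives $\widetilde{i}_{X+\sqrt{-1}Y}D=\widetilde{L}_{X+\sqrt{-1}Y}-D\,\widetilde{i}_{X+\sqrt{-1}Y}$, so applying it to a basic $\eta$ yields $\widetilde{i}_{X+\sqrt{-1}Y}(D\eta)=\widetilde{L}_{X+\sqrt{-1}Y}\eta-D(\widetilde{i}_{X+\sqrt{-1}Y}\eta)=0-D(0)=0$, since both $\widetilde{L}_{X+\sqrt{-1}Y}\eta$ and $\widetilde{i}_{X+\sqrt{-1}Y}\eta$ vanish by hypothesis.

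For the invariance (Lie derivative) condition the strategy is to prove that $\widetilde{L}_{X+\sqrt{-1}Y}$ commutes with $D$; then $\widetilde{L}_{X+\sqrt{-1}Y}(D\eta)=D(\widetilde{L}_{X+\sqrt{-1}Y}\eta)=D(0)=0$. The point to watch is that, by its definition, $\widetilde{L}_{X+\sqrt{-1}Y}=L_{X}\otimes 1+1\otimes(\mathcal{L}_{X}+\sqrt{-1}\mathcal{L}_{Y})$ is \emph{not} equal to $\widetilde{L}_{X}+\sqrt{-1}\widetilde{L}_{Y}$; rather it decomposes as $\widetilde{L}_{X+\sqrt{-1}Y}=\widetilde{L}_{X}+\sqrt{-1}(1\otimes\mathcal{L}_{Y})$. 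The first summand commutes with $D$ by Lemma 6 (2). For the second summand I would check directly that $1\otimes\mathcal{L}_{Y}$ commutes with $D=d_{W}\otimes 1+1\otimes d$: it commutes with $d_{W}\otimes 1$ because the two operators act on different tensor factors and $\mathcal{L}_{Y}$ has even degree, while on the manifold factor $\mathcal{L}_{Y}d=d\,\mathcal{L}_{Y}$ is the standard fact that the Lie derivative commutes with the exterior derivative. Hence $1\otimes\mathcal{L}_{Y}$ commutes with $D$, and therefore so does $\widetilde{L}_{X+\sqrt{-1}Y}$.

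Combining the two parts shows $D\eta$ is basic, so $D$ preserves $(W(\mathfrak{g})\otimes\Omega^{*}(M))_{bas}$. I expect the only genuine subtlety, rather than a real obstacle, to be the decomposition in the second paragraph: one must not carelessly assume that $\widetilde{L}_{X+\sqrt{-1}Y}$ splits as a complex-linear combination of $\widetilde{L}_{X}$ and $\widetilde{L}_{Y}$, and must instead isolate the extra term $1\otimes\mathcal{L}_{Y}$ and invoke $[\mathcal{L}_{Y},d]=0$ explicitly. Everything else is a direct application of Lemmas 6 and 7.
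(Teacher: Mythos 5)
Your proof is correct, and its first half coincides exactly with the paper's: both rearrange Lemma 7 to get $\widetilde{i}_{X+\sqrt{-1}Y}(D\eta)=\widetilde{L}_{X+\sqrt{-1}Y}\eta-D(\widetilde{i}_{X+\sqrt{-1}Y}\eta)=0$. For the invariance condition, however, you and the paper diverge. The paper simply applies Lemma 7 a second time: $\widetilde{L}_{X+\sqrt{-1}Y}(D\eta)=D(\widetilde{i}_{X+\sqrt{-1}Y}D\eta)+\widetilde{i}_{X+\sqrt{-1}Y}(D^{2}\eta)$, which vanishes because the inner contraction term was just shown to be zero and $D^{2}=0$ by Lemma 6(1) --- note this step does not even use the hypothesis $\widetilde{L}_{X+\sqrt{-1}Y}\eta=0$. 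You instead establish the commutation $[\widetilde{L}_{X+\sqrt{-1}Y},D]=0$ via the decomposition $\widetilde{L}_{X+\sqrt{-1}Y}=\widetilde{L}_{X}+\sqrt{-1}(1\otimes\mathcal{L}_{Y})$, Lemma 6(2), and the classical identity $[\mathcal{L}_{Y},d]=0$; this is valid, and your warning that $\widetilde{L}_{X+\sqrt{-1}Y}$ is not $\widetilde{L}_{X}+\sqrt{-1}\widetilde{L}_{Y}$ (the paper's definition omits the term $\sqrt{-1}\,L_{Y}\otimes 1$) is a genuinely useful observation about the conventions here. What each approach buys: yours yields a reusable identity, the complexified analogue of Lemma 6(2), while the paper's is more economical, needing only Lemma 7 and $D^{2}=0$. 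In fact your commutation also follows formally from those same two facts, since writing $\widetilde{i}$ for $\widetilde{i}_{X+\sqrt{-1}Y}$, Lemma 7 gives $\widetilde{L}_{X+\sqrt{-1}Y}D-D\widetilde{L}_{X+\sqrt{-1}Y}=(D\widetilde{i}+\widetilde{i}D)D-D(D\widetilde{i}+\widetilde{i}D)=\widetilde{i}D^{2}-D^{2}\widetilde{i}=0$, so your explicit tensor-factor verification, while sound, could have been obtained for free.
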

\begin{proof}
Set
$\eta\in(W(\mathfrak{g})\otimes\Omega^{*}(M))_{bas}$,
then $\widetilde{i}_{X+\sqrt{-1}Y}\eta=0$ and $\widetilde{L}_{X+\sqrt{-1}Y}\eta=0$ for any
$X,Y\in\mathfrak{g}$. So by Lemma 7., we have
$$(\widetilde{i}_{X+\sqrt{-1}Y}\cdot D)\eta=\widetilde{i}_{X+\sqrt{-1}Y}(D\eta)=\widetilde{L}_{X+\sqrt{-1}Y}\eta-D(\widetilde{i}_{X+\sqrt{-1}Y}\eta)=0$$ for any
$X,Y\in\mathfrak{g}$. \par

And
$$\widetilde{L}_{X+\sqrt{-1}Y}(D\eta)=D(\widetilde{i}_{X+\sqrt{-1}Y}\cdot D)\eta+\widetilde{i}_{X+\sqrt{-1}Y}(D^{2})\eta=0$$
for any $X,Y\in\mathfrak{g}$. \par
Then we get
$$D\eta\in(W(\mathfrak{g})\otimes\Omega^{*}(M))_{bas}.$$
\end{proof}

Now we can construct the cohomology group as following:

By the complex
$((W(\mathfrak{g})\otimes\Omega^{*}(M))_{bas},D)$,
we can define the cohomology group as follow,
$$H^{*}_{G}(M)\doteq\frac{\rm{Ker} D|_{(W(\mathfrak{g})\otimes\Omega^{*}(M))_{bas}}}{\rm{Im}
D|_{(W(\mathfrak{g})\otimes\Omega^{*}(M))_{bas}}}.$$

\begin{definition}
The cohomology group $H^{*}_{G}(M)$ is called the equivariant
cohomology groups of $M$. The equivariant cohomology construct by
this way is called $\mathbf{Weil \ model}$.
\end{definition}

\section{The main results}
In this section we explain the precise relation between the Weil model and the
Cartan model for equivariant cohomology defined earlier.

\begin{theorem}
$\psi$ is an isomorphism of differential algebra, i.e., the diagram
\[
\begin{CD}
W(\mathfrak{g})\otimes\Omega^{*}(M) @>{\psi}>>
W(\mathfrak{g})\otimes\Omega^{*}(M)\\
@V{\delta}VV @VV{D}V\\
W(\mathfrak{g})\otimes\Omega^{*}(M) @>>{\psi}>
W(\mathfrak{g})\otimes\Omega^{*}(M)\\
\end{CD}
\]
commutes.
\end{theorem}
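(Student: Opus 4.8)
The plan is to read the result directly off the exponential presentations of $\delta$ and $\psi$ already obtained. Abbreviate $A=\sum_{i=1}^{n}\theta^{i}\otimes(\iota_{i}+\sqrt{-1}f_{i}^{j}\iota_{j})$. The proof of Lemma 4 gives $\delta=\exp(A)\,D\,\exp(-A)$ with $D=d_{W}\otimes 1+1\otimes d$, and the definition of $\psi$ in the BRST section, using the commutativity of the factors established in Lemma 5, gives $\psi=\prod_{i}\bigl(1-\theta^{i}\otimes(\iota_{i}+\sqrt{-1}f_{i}^{j}\iota_{j})\bigr)=\exp(-A)$. Since $A$ is built from the finitely many odd nilpotent generators $\theta^{i}$, any power $A^{m}$ with $m>n$ contains a repeated factor $\theta^{i}$ and hence vanishes; thus $A$ is nilpotent and $\psi=\exp(-A)$ is a linear bijection with inverse $\exp(A)$.

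First I would establish the commutativity of the square, which is the content of the diagram. This is immediate by substitution:
\[
\psi\circ\delta=\exp(-A)\,\exp(A)\,D\,\exp(-A)=D\,\exp(-A)=D\circ\psi.
\]
No sign analysis is needed here beyond the two exponential identities; all the relevant information has already been packaged into the conjugation formula for $\delta$ and the exponential formula for $\psi$.

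Next I would check that $\psi$ is an isomorphism of \emph{algebras}, so that the phrase ``isomorphism of differential algebra'' is fully justified and not merely the intertwining of differentials. The natural route is to show that $A$ is a graded derivation of the tensor-product algebra $W(\mathfrak{g})\otimes\Omega^{*}(M)$: each summand is the composite of left multiplication by the odd element $\theta^{i}$ on the first factor with the odd contraction $\iota_{i}+\sqrt{-1}f_{i}^{j}\iota_{j}$ on the second, hence an even operator, and one verifies the graded Leibniz rule $A(uv)=A(u)\,v+u\,A(v)$ directly on simple tensors. Once $A$ is known to be a derivation, its exponential $\psi=\exp(-A)$ and the inverse $\exp(A)$ are algebra automorphisms, so $\psi$ is an algebra isomorphism; combined with the previous paragraph this makes $\psi$ an isomorphism of differential algebras.

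The main obstacle is this last step: confirming the graded Leibniz identity for $A$ requires careful tracking of the Koszul signs $(-1)^{|w|}$ that arise when the contraction part is commuted past a factor in $W(\mathfrak{g})$, and of the sign incurred in multiplying the two tensor factors. The computation is routine but sign-sensitive, and it is precisely the point at which the graded-tensor-product conventions must be pinned down. The differential half of the theorem, by contrast, follows formally and costs nothing once $\delta$ and $\psi$ are written as the conjugation and the exponential of the same operator $A$.
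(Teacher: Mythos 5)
Your proof takes essentially the same route as the paper: the paper's one-line proof just cites the conjugation formula $\delta=\exp(A)\,D\,\exp(-A)$ from Lemma 4 together with $\psi=\exp(-A)$, which are exactly the two identities you substitute to get $\psi\circ\delta=D\circ\psi$. Your extra verification that $A$ is a nilpotent even derivation, so that $\psi$ is a genuine algebra automorphism rather than merely a cochain isomorphism, is a sound addition the paper omits (and your orientation of the conjugation is the one actually consistent with the diagram, whereas the paper's displayed $\delta=\psi\cdot D\cdot\psi^{-1}$ has $\psi$ and $\psi^{-1}$ interchanged), but the core argument is identical.
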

\begin{proof}
By computation in lemma 4., we have $$\delta=\psi\cdot D\cdot\psi^{-1}$$
\end{proof}

\begin{theorem}
We have the following commutative diagram:
\[
\begin{CD}
(W(\mathfrak{g})\otimes\Omega^{*}(M),\delta)
@>{\psi}>>
(W(\mathfrak{g})\otimes\Omega^{*}(M),D)\\
@A{id}AA @AA{id}A\\
(S(\mathfrak{g}^{*})\otimes\Omega^{*}(M))^{\widetilde{G}}
@>>{\psi}>
(W(\mathfrak{g})\otimes\Omega^{*}(M))_{bas}\\
\end{CD}
\]
\end{theorem}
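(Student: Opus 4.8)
The plan is to reduce the commutativity of the square to a single substantive fact: that $\psi$ carries the Cartan complex $(S(\mathfrak{g}^{*})\otimes\Omega^{*}(M))^{\widetilde{G}}$ into the basic complex $(W(\mathfrak{g})\otimes\Omega^{*}(M))_{bas}$. Once this is known, commutativity is automatic, because all four maps act on honest elements: the two vertical arrows are the inclusions induced by $S(\mathfrak{g}^{*})\hookrightarrow W(\mathfrak{g})$ (identity on elements), and the two horizontal arrows are literally the same operator $\psi$. So the proof amounts to establishing the inclusion $\psi\big((S(\mathfrak{g}^{*})\otimes\Omega^{*}(M))^{\widetilde{G}}\big)\subseteq(W(\mathfrak{g})\otimes\Omega^{*}(M))_{bas}$, together with the compatibility of differentials that makes the left-hand arrow a map of complexes.

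First I would establish the horizontal intertwining identity $\psi^{-1}\,\widetilde{i}_{X+\sqrt{-1}Y}\,\psi=i_{X}\otimes 1$. Writing $\psi=\exp(-C)$ with $C=\sum_{i}\theta^{i}\otimes(\iota_{i}+\sqrt{-1}f_{i}^{j}\iota_{j})$, this follows from the expansion $\psi^{-1}A\psi=A+[C,A]+\tfrac12[C,[C,A]]+\cdots$ applied to $A=\widetilde{i}_{X+\sqrt{-1}Y}$. A short graded-commutator computation, using only $i_{e_{r}}\theta^{s}=\delta_{r}^{s}$ and the anticommutativity of contractions, gives $[C,\widetilde{i}_{X+\sqrt{-1}Y}]=-1\otimes(\iota_{X}+\sqrt{-1}\iota_{Y})$, which cancels the vertical part of $\widetilde{i}_{X+\sqrt{-1}Y}$ and leaves $i_{X}\otimes 1$; all higher brackets vanish since $[C,1\otimes(\iota_{X}+\sqrt{-1}\iota_{Y})]=0$. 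Granting this, if $\alpha$ is $\theta$-free then $(i_{X}\otimes 1)\alpha=0$, whence $\widetilde{i}_{X+\sqrt{-1}Y}\,\psi(\alpha)=\psi\,(i_{X}\otimes 1)\alpha=0$, so $\psi(\alpha)$ is horizontal.

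Next I would treat invariance. Using Lemma 7 in the form $\widetilde{L}_{X+\sqrt{-1}Y}=D\,\widetilde{i}_{X+\sqrt{-1}Y}+\widetilde{i}_{X+\sqrt{-1}Y}\,D$ and the horizontality just proved, the first summand of $\widetilde{L}_{X+\sqrt{-1}Y}\psi(\alpha)$ drops out, leaving $\widetilde{i}_{X+\sqrt{-1}Y}\,D\,\psi(\alpha)$. By Theorem 1, $D\psi(\alpha)=\psi\,\delta(\alpha)$, and by the intertwining identity this is $\psi\,(i_{X}\otimes 1)\,\delta(\alpha)$. Thus invariance of $\psi(\alpha)$ is equivalent to $\delta(\alpha)$ being $\theta$-free, i.e.\ to $\delta$ preserving the Cartan subspace and restricting there to $d_{C}$. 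I would prove this by expanding $\delta\alpha$ in powers of $\theta$ for $\theta$-free $\alpha$: the $\theta^{0}$ part is exactly $d_{C}\alpha$; using $d_{W}\phi^{i}=-\sum_{j,k}c^{i}_{jk}\theta^{j}\phi^{k}$ together with Lemma 1 one finds the $\theta^{1}$ part to be $\sum_{i}\theta^{i}\wedge\eta_{i}$ with $\eta_{i}=\big(L_{e_{i}}\otimes 1+1\otimes(\mathcal{L}_{i}+\sqrt{-1}f_{i}^{j}\mathcal{L}_{j})\big)\alpha$, and the structure-constant and bracket terms assemble the $\theta^{2}$ part. All of these vanish precisely when $\alpha$ is $\widetilde{G}$-invariant, which simultaneously gives $\delta|_{\mathrm{Cartan}}=d_{C}$ and $\widetilde{L}_{X+\sqrt{-1}Y}\psi(\alpha)=0$.

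The step I expect to be the main obstacle is precisely the vanishing of the $\theta^{1}$ and $\theta^{2}$ parts of $\delta\alpha$. This is where the invariance hypothesis must be used at full strength: the summed condition $\sum_{i}\phi^{i}\otimes(\mathcal{L}_{i}+\sqrt{-1}f_{i}^{j}\mathcal{L}_{j})\alpha=0$ has to be read as forcing the component conditions $\eta_{i}=\big(L_{e_{i}}\otimes 1+1\otimes(\mathcal{L}_{i}+\sqrt{-1}f_{i}^{j}\mathcal{L}_{j})\big)\alpha=0$ for each $i$, since only then are the coefficients of the linearly independent generators $\theta^{i}$ annihilated. The sign bookkeeping in the graded tensor product $W(\mathfrak{g})\otimes\Omega^{*}(M)$, and the verification that the $\sqrt{-1}f$-deformation terms remain consistent with the structure constants entering through $d_{W}$, are the delicate points; once they are checked, the inclusion $\psi(\text{Cartan})\subseteq\text{basic}$ holds and, the vertical arrows being inclusions, the square commutes.
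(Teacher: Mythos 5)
Your first half coincides with the paper's own argument: the paper also obtains horizontality of $\psi(\alpha)$ from the intertwining identity $\psi\cdot(i_{k}\otimes 1)=\bigl(i_{k}\otimes 1+1\otimes(\iota_{k}+\sqrt{-1}f_{k}^{j}\iota_{j})\bigr)\cdot\psi$, which it states without proof and you prove by the commutator expansion; that computation is correct. For invariance you genuinely diverge: the paper commutes $\psi$ directly past $L_{k}\otimes 1+1\otimes(\mathcal{L}_{k}+\sqrt{-1}f_{k}^{j}\mathcal{L}_{j})$ and tacitly uses $(L_{k}\otimes 1+1\otimes(\mathcal{L}_{k}+\sqrt{-1}f_{k}^{j}\mathcal{L}_{j}))\alpha=0$, whereas you use Lemma 7 together with Theorem 1 (in the direction $D\psi=\psi\delta$, which is the one consistent with Lemma 4 and with the diagram, despite the sign slip ``$\delta=\psi D\psi^{-1}$'' in the paper's proof of Theorem 1) to reduce everything to $\delta\alpha$ being $\theta$-free. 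That reduction is valid and makes explicit what the paper's terse appeal to $[\delta,i_{k}\otimes 1]$ leaves implicit.

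The gap is at your final step, and it is a genuine one, not merely ``delicate''. First, the component conditions $\eta_{i}=\bigl(L_{e_{i}}\otimes 1+1\otimes(\mathcal{L}_{i}+\sqrt{-1}f_{i}^{j}\mathcal{L}_{j})\bigr)\alpha=0$ do not follow from the paper's stated definition of $(S(\mathfrak{g}^{*})\otimes\Omega^{*}(M))^{\widetilde{G}}$: that definition is the single summed condition $\sum_{i}\phi^{i}\otimes(\mathcal{L}_{i}+\sqrt{-1}f_{i}^{j}\mathcal{L}_{j})\alpha=0$, which contains no $L_{e_{i}}\otimes 1$ term at all, and in which cancellation between different $i$ is possible; the $\phi^{i}$ are multiplication operators on the symmetric factor, so your appeal to ``linearly independent generators $\theta^{i}$'' conflates the $\theta$-expansion of $\delta\alpha$ with the $\phi$-expansion of the invariance condition. (The paper's own proof makes exactly the same silent substitution, so here you are no worse off, and you at least flag it.) Second, and fatal to the route as written: the $\theta^{2}$ part of $\delta\alpha$ does \emph{not} vanish for invariant $\alpha$ with the $\delta$ printed in the paper. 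Writing $\hat{\iota}_{a}=\iota_{a}+\sqrt{-1}f_{a}^{b}\iota_{b}$ and $\hat{\mathcal{L}}_{a}=\mathcal{L}_{a}+\sqrt{-1}f_{a}^{b}\mathcal{L}_{b}$, and granting $\eta_{i}=0$ for all $i$, the coefficient of $\theta^{j}\theta^{k}$ ($j<k$) in $\delta\alpha$ is
\[
\Bigl(\sum_{i}c^{i}_{jk}\,1\otimes\hat{\iota}_{i}-1\otimes[\hat{\mathcal{L}}_{j},\hat{\iota}_{k}]\Bigr)\alpha
+\bigl(L_{e_{j}}\otimes\hat{\iota}_{k}-L_{e_{k}}\otimes\hat{\iota}_{j}\bigr)\alpha ,
\]
and since $[\mathcal{L}_{a},\iota_{b}]=\sum_{i}c^{i}_{ab}\iota_{i}$ gives $[\hat{\mathcal{L}}_{j},\hat{\iota}_{k}]\neq\sum_{i}c^{i}_{jk}\hat{\iota}_{i}$ for general $f$, and the cross terms $L_{e_{j}}\otimes\hat{\iota}_{k}-L_{e_{k}}\otimes\hat{\iota}_{j}$ survive even at $f=0$, this expression is nonzero in general (at $f=0$ it reflects the fact that the paper's $\theta^{2}$ terms in $\delta$ disagree with Kalkman's model, where no such terms occur). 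So $\theta$-freeness fails unless one imposes unstated compatibility between $f^{i}_{j}$ and the structure constants, e.g.\ $\mathfrak{g}$ abelian. The paper's own proof buries the same hypotheses inside its asserted but unproved commutation identities (that $[\delta,i_{k}\otimes 1]$ equals the twisted Lie derivative, and that $\psi$ commutes with it; both fail for general $f$ by the same commutator computation), so your expansion has the merit of making the obstruction visible --- but your proposal asserts the vanishing rather than proving it, and the assertion is false without extra hypotheses.
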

\begin{proof}
For $\forall\alpha\in(S(\mathfrak{g}^{*})\otimes\Omega^{*}(M))^{\widetilde{G}}$,by
$$\prod_{a}(1-\theta^{a}\otimes(\iota_{a}+\sqrt{-1}f_{a}^{b}\iota_{b}))\cdot(i_{k}\otimes1)=(i_{k}\otimes1+1\otimes(\iota_{k}+\sqrt{-1}f_{k}^{j}\iota_{j}))\cdot\prod_{a}(1-\theta^{a}\otimes(\iota_{a}+\sqrt{-1}f_{a}^{b}\iota_{b}))$$
we have
$$(i_{k}\otimes1+1\otimes(\iota_{k}+\sqrt{-1}f_{k}^{j}\iota_{j}))(\psi(\alpha))=0.$$
Because
$$[\delta,i_{k}\otimes1]=L_{k}\otimes1+1\otimes(\mathcal{L}_{k}+\sqrt{-1}f^{j}_{k}\mathcal{L}_{j})$$
and
$$\prod_{a}(1-\theta^{a}\otimes(\iota_{a}+\sqrt{-1}f_{a}^{b}\iota_{b}))\cdot(L_{k}\otimes1+1\otimes(\mathcal{L}_{k}+\sqrt{-1}f^{j}_{k}\mathcal{L}_{j}))$$ $$=(L_{k}\otimes1+1\otimes(\mathcal{L}_{k}+\sqrt{-1}f^{j}_{k}\mathcal{L}_{j}))\cdot\prod_{a}(1-\theta^{a}\otimes(\iota_{a}+\sqrt{-1}f_{a}^{b}\iota_{b}))$$
so we have
$$(L_{k}\otimes1+1\otimes(\mathcal{L}_{k}+\sqrt{-1}f^{j}_{k}\mathcal{L}_{j}))(\psi(\alpha))=0$$
Then we get $\psi(\alpha)\in(W(\mathfrak{g})\otimes\Omega^{*}(M))_{bas}$. So we get the commutative diagram.
\end{proof}

The theorem 2. tell us the relation about BRST model and Cartan model.

\begin{theorem}
\[
\begin{CD}
(S(\mathfrak{g}^{*})\otimes\Omega^{*}(M))^{\widetilde{G}}
@>{\psi}>>
(W(\mathfrak{g})\otimes\Omega^{*}(M))_{bas}\\
\end{CD}
\]
is a isomorphism.
\end{theorem}

\begin{proof}
For $\forall\eta\in(W(\mathfrak{g})\otimes\Omega^{*}(M))_{bas}$,
$\psi^{-1}\eta=\prod_{a}(1+\theta^{a}\otimes(\iota_{a}+\sqrt{-1}f_{a}^{b}\iota_{b}))\eta$.
By $$\prod_{a}(1+\theta^{a}\otimes(\iota_{a}+\sqrt{-1}f_{a}^{b}\iota_{b}))|_{(W(\mathfrak{g})\otimes\Omega^{*}(M))_{bas}}=\prod_{a}(1-\theta^{a}i_{a}\otimes1)|_{(W(\mathfrak{g})\otimes\Omega^{*}(M))_{bas}}$$
and
$${Im}(1-\theta^{a} i_{a}\otimes 1)={Ker}(i_{a}\otimes 1)$$
So $$\psi^{-1}\eta\in (S(\mathfrak{g}^{*})\otimes\Omega^{*}(M))_{bas}.$$
Then $$(\sum_{i=1}^{n}\phi^{i}\otimes(\mathcal{L}_{i}+\sqrt{-1}f_{i}^{j}\mathcal{L}_{j}))\psi^{-1}\eta=0$$
i.e.,$\psi^{-1}\eta\in (S(\mathfrak{g}^{*})\otimes\Omega^{*}(M))^{\widetilde{G}}$.
And by the proof in theorem 2.we get that $\psi$ is a isomorphism.
\end{proof}
The theorem 3. tell us the relation about Cartan model and Weil model.

\end{CJK}
\end{document}